\documentclass[a4paper]{amsart}
\usepackage[utf8]{inputenc}
\usepackage[english]{babel}

\usepackage{amssymb,amsmath,amsthm,mathtools,esint}
\usepackage[square,numbers]{natbib}
\usepackage{xurl}
\usepackage[hidelinks]{hyperref}
\usepackage[capitalise,nameinlink]{cleveref}
\crefname{equation}{}{}
\crefformat{equation}{\textup{#2(#1)#3}}

\newtheorem{theorem}{Theorem}
\newtheorem{corollary}[theorem]{Corollary}

\title[Local approximation and super-localization]{Local Approximation of $A$-Harmonic Functions
and the Super-Localization Conjecture}
\author[Moritz Hauck]{Moritz Hauck${}^{*}$}
\address{${}^{*}$Institute for Applied and Numerical Mathematics, Karlsruhe Institute of Technology, Englerstr.~2, 76131 Karlsruhe, Germany}
\email{moritz.hauck@kit.edu}

\author[Chupeng Ma]{Chupeng Ma${}^{\dagger}$}
\address{${}^{\dagger}$School of Sciences, Great Bay University, Dongguan 523000, China}
\email{chupeng.ma@gbu.edu.cn}

\author[Daniel Peterseim]{Daniel Peterseim${}^{\ddagger}$}
\address{${}^{\ddagger}$Institute of Mathematics \& Centre for Advanced Analytics and Predictive Sciences (CAAPS), University of Augsburg, Universit\"atsstr.~12a, 86159 Augsburg, Germany}
\email{daniel.peterseim@uni-a.de}

\subjclass[2020]{35J15, 41A46, 65N15, 65N30}
\keywords{Kolmogorov $n$-widths, $A$-harmonic functions, local approximation, rough coefficients, generalized finite element method, super-localized orthogonal decomposition}

\begin{document}

\begin{abstract}
We consider the local energy approximation of $A$-harmonic functions on
concentric cubes with a fixed size ratio in $\mathbb{R}^d$, $d\ge2$,
for bounded, symmetric, uniformly elliptic diffusion tensors~$A$.
We give a short proof of the Kolmogorov $n$-width decay rate
$\exp(-cn^{1/(d-1)})$ conjectured by Ma
[Found.\ Comput.\ Math.\ 26 (2026), 1699--1758], where $n$ denotes
the approximation dimension. We apply this result to numerical homogenization, improving local approximation estimates for
multiscale spectral generalized finite element methods and proving
the conjectured local super-localization rate of Hauck and Peterseim
[Math.\ Comp.\ 92 (2023), 981--1003] for interior Cartesian patches.
\end{abstract}

\maketitle

\section{Introduction}
\enlargethispage{3pt}

Numerical homogenization aims at the efficient approximation of elliptic multiscale problems on coarse computational scales, without resolving all fine-scale features globally. A recurring local problem is the approximation of $A$-harmonic functions on a subdomain from information on a larger domain. 
In the multiscale spectral generalized finite element method (MS-GFEM), Babu\v{s}ka and Lipton~\cite{MR2801210} formulate this problem through the Kolmogorov $n$-widths of a restriction operator, characterize optimal approximation spaces by a generalized eigenvalue problem, and prove nearly exponential convergence. The AL basis of Grasedyck, Greff, and Sauter~\cite{MR2902606} likewise uses local approximation of $A$-harmonic functions to construct a numerical
homogenization method. The localized orthogonal decomposition constructs operator-adapted basis functions and truncates their exponentially decaying corrections~\cite{MR3246801}. Subspace-decomposition arguments explain this decay through a uniformly contracting local iteration~\cite{MR3536998,MR3834684}. These localization estimates also yield bounds for the Kolmogorov $n$-widths, as shown in~\cite[Section~6.3]{MR4700411}.

Conversely, local approximation estimates can be used to design sources with localized responses. A normalized coarse source that annihilates a good approximation space for $A$-harmonic functions has a small pairing with every normalized $A$-harmonic function. These pairings control the difference between its global elliptic response and its patch-local response. Hauck and Peterseim~\cite{MR4550317} exploit this characterization in the super-localized orthogonal decomposition (SLOD) by optimizing coarse sources and conjecture super-exponential decay with respect to the oversampling parameter (the number of element layers in a patch). The associated approximation question concerns the decay of the Kolmogorov $n$-widths, where $n$ denotes the dimension of the approximation space. Babu\v{s}ka and Lipton~\cite{MR2801210} prove the rate $\exp(-n^{1/(d+1)-\epsilon})$, with $d$ the spatial dimension, for every $\epsilon>0$. Ma~\cite{MR5010382} improves the exponent to $1/d$ within a unified framework for MS-GFEMs and conjectures the exponent $1/(d-1)$. For the Laplacian, Mohr~\cite{Mohr2026} proves the 
local SLOD decay estimate corresponding to this exponent. A proof for general bounded, symmetric, uniformly elliptic diffusion tensors was recently presented in~\cite{Jin2026}.

We give a short alternative proof using the energy-determinant idea
of Li and Wang~\cite{MR1804533}, also used in~\cite{Jin2026}.
Instead of boundary spectral estimates, our argument relies on
local Poincar\'e inequalities and Hadamard's inequality applied to
successive layers. This yields an explicit numerical constant and
improves the dependence on the coefficient contrast in the exponent.
We discuss the consequences for MS-GFEM and the super-localized
generalized finite element method~\cite{MR4700411}, and prove
the conjectured local super-localization rate of Hauck and
Peterseim~\cite{MR4550317}.

\section{Local approximation of $A$-harmonic functions}

Let $d\ge 2$ and $r>0$, and define concentric cubes 
\[ \omega\coloneqq (-r,r)^d, \qquad \omega^*\coloneqq (-3r,3r)^d.\]
This setting suffices, since translation, rescaling, and a finite
covering argument extend the approximation result to compactly nested bounded
Lipschitz domains, with geometry-dependent constants.
Further, let
\[A\in L^\infty(\omega^*;\mathbb{R}_{\mathrm{sym}}^{d\times d})\]
satisfy the uniform ellipticity bounds
\begin{equation}\label{eq:coeffbounds}
    \alpha|\xi|^2 \le (A(x)\xi)\cdot\xi \le \beta|\xi|^2 
\end{equation}
for almost every $x\in\omega^*$ and every $\xi\in\mathbb{R}^d$, where $0<\alpha\le\beta<\infty$.

For every open set $G\subseteq\omega^*$, define
\[a_G(u,v)\coloneqq \int_G (A(x)\nabla u(x))\cdot\nabla v(x)\,dx\]
and the associated energy seminorm
\[\|u\|_{a,G}\coloneqq a_G(u,u)^{1/2}.\]
Furthermore, let
\[\mathcal H(G)\coloneqq \left\{u\in H^1(G):a_G(u,\varphi)=0\quad\text{for all }\varphi\in H^1_0(G)\right\}.
\]

The following theorem confirms the decay rate conjectured by
Ma~\cite{MR5010382} in the present setting.

\begin{theorem}[Kolmogorov $n$-width of restricted $A$-harmonic functions]
\label{thm:main}
For every $n\in\mathbb N$ with $n+1\ge 2(3^d-1)$, the Kolmogorov $n$-width
\[
d_n\coloneqq \adjustlimits\inf_{\substack{V_n\subset\mathcal H(\omega)\\
                \dim V_n= n}}
\sup_{\substack{u\in\mathcal H(\omega^*)\\
                \|u\|_{a,\omega^*}=1}}
\;
\mathop{\inf\vphantom{\sup}}_{v\in V_n}
\|u-v\|_{a,\omega}
\]
satisfies
\[
d_n
\le
\exp\left(
-c\sqrt{\frac{\alpha}{\beta}}\,
(n+1)^{1/(d-1)}
\right),
\]
where $c\coloneqq \bigl(128(1+\pi^{-1})\bigr)^{-1}$.
\end{theorem}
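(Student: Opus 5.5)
We follow the energy-determinant idea of Li and Wang. The $n$-width $d_n$ equals the square root of the $(n+1)$-st largest eigenvalue $\lambda_{n+1}$ of the compact, self-adjoint, nonnegative operator $T$ on $\mathcal H(\omega^*)$ (modulo constants) defined by $a_{\omega^*}(Tu,v)=a_\omega(u,v)$. We therefore bound $\lambda_{n+1}$. Let $u_1,\dots,u_{m}$ be $a_{\omega^*}$-orthonormal eigenfunctions for the top $m\coloneqq n+1$ eigenvalues, and consider the Gram determinants
\[
D(G)\coloneqq\det\bigl(a_G(u_i,u_j)\bigr)_{i,j=1}^m
\]
on the nested cubes $G_k\coloneqq(-r_k,r_k)^d$, with $r=r_0<r_1<\dots<r_L\le 3r$. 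Then $D(G_0)=\prod_i\lambda_i\ge\lambda_{m}^m$ and $D(G_L)\le1$. The main step is a one-layer contraction
\[
D(G_k)\le q^{m}\,D(G_{k+1}),
\]
with an explicit factor $q<1$ valid for a suitable number of layers. Multiplying these contractions gives $\lambda_m\le q^{L}$. Balancing the layer count $L$ against the layer width then produces the exponent $(n+1)^{1/(d-1)}$.

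For the one-layer estimate, fix $G\subset G'$, where the layer $G'\setminus G$ has width $h$. Each $v$ in the span of the $u_i$ is $A$-harmonic on $G'$. We plan to use Caccioppoli's inequality with a cutoff supported in the layer: we subtract from $v$ a piecewise constant function on a partition of the layer $S\coloneqq G'\setminus G$ into roughly $N\approx (r/h)^{d-1}$ cells of diameter $\sim h$. This gives
\[
\|v\|_{a,G}^2\le a_G(v,v)\le \frac{\beta}{\alpha}\,\frac{C}{h^2}\,\|v-\Pi v\|_{L^2(S)}^2
\]
in a refined, coefficient-weighted form. Here $\Pi$ denotes the projection onto cell averages. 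The local Poincaré inequality on convex cells (constant $h/\pi$) then turns this into
\[
\|v\|_{a,G}^2\le \theta\,\|v\|_{a,G'}^2\qquad\text{for all } v\perp\text{(a subspace of dimension }\le N),
\]
with $\theta=(1+\mu)^{-1}<1$ depending on $\sqrt{\alpha/\beta}$ only through the layer geometry.

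More concretely, we would use a ``hole-filling'' version: $\|v\|_{a,G}^2\le\theta\|v\|_{a,G'}^2$ on the $a$-orthogonal complement of an $N$-dimensional space (the cell averages on the layer). A min-max argument then bounds all but $N$ of the generalized eigenvalues of the pair $(a_G,a_{G'})$ restricted to $\mathrm{span}\{u_i\}$ by $\theta$, while the remaining $N$ are at most $1$. Taking the determinant of this pencil yields
\[
D(G)\le\theta^{m-N}D(G').
\]
This is where Hadamard's inequality (or its eigenvalue form) enters. We choose $N\le m/2$; the hypothesis $n+1\ge 2(3^d-1)$ ensures that at least the coarsest cell partition of the layer, which has $3^d-1$ cells, is admissible.

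Iterating over layers, we choose $h$ so that $N\approx m/2$, that is $h\approx r\,m^{-1/(d-1)}$, and the number of layers is $L\approx 2r/h\approx m^{1/(d-1)}$. This gives
\[
\lambda_m^m\le\theta^{(m/2)L},
\]
hence
\[
d_n=\lambda_m^{1/2}\le\exp\bigl(-\tfrac14 L\,|\log\theta|\bigr).
\]
The obstacle is getting $|\log\theta|\gtrsim\sqrt{\alpha/\beta}$ rather than $\alpha/\beta$. A single Caccioppoli step gives only $\alpha/\beta$. To fix this, we would group $k\approx\sqrt{\beta/\alpha}$ consecutive thin sublayers and use the optimized (exponential-type) cutoff across them. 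Equivalently, we would apply the hole-filling estimate with an optimized cutoff, whose contraction behaves like $\exp(-c\sqrt{\alpha/\beta}\,\cdot\text{width}/h)$. Tracking constants through the Poincaré constant $1/\pi$ and the factor-of-two losses should produce $c=(128(1+\pi^{-1}))^{-1}$.
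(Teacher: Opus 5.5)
Your architecture is the paper's: Li--Wang Gram determinants of the first $n+1$ eigenfunctions on nested cubes, vanishing cell means on each layer, the cellwise Poincar\'e constant $h/\pi$, hole-filling, and Hadamard's inequality. Your min--max count of the generalized eigenvalues is an equivalent way to get $\det B_k\le\theta^{\dim W_k}$. The gap is in the one-layer estimate, which is the only place where the contrast enters. You use the squared Caccioppoli inequality $\|v\|_{a,G}^2\le 4\beta h^{-2}\|v\|_{L^2(S)}^2$ followed by Poincar\'e. This gives $\|v\|_{a,G}^2\le\frac{4}{\pi^2}\frac{\beta}{\alpha}\|v\|_{a,S}^2$, hence only $-\log\theta\sim\alpha/\beta$. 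Carried through, your argument proves the bound with $\alpha/\beta$ instead of $\sqrt{\alpha/\beta}$ in the exponent, which is weaker than the theorem.

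Your proposed repair, grouping $t\approx\sqrt{\beta/\alpha}$ sublayers under one cutoff, is not worked out, and in the determinant framework it does not give the claimed rate. Since $\nabla\eta$ is then supported on $t$ sublayers, you must impose vanishing means on all cells there, so each group costs $t$ times as many constraints. Redoing the count, the total number $(3^d-1)K^d$ of constrained cells must be at most $(n+1)K/(2t)$. This forces $K\approx\bigl((n+1)/t\bigr)^{1/(d-1)}$, while you gain only $K/t$ contractions by a constant factor. The final exponent is then of order $(\alpha/\beta)^{d/(2(d-1))}(n+1)^{1/(d-1)}$, which is no gain at all for $d=2$.

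The missing idea is elementary: use $\eta_k$ instead of $\eta_k^2$ and do not absorb the gradient term. Since $\eta_k v\in H^1_0(\omega^*)$,
\[
\|v\|_{a,\omega_{k-1}}^2\le\int_{\omega^*}\eta_k(A\nabla v)\cdot\nabla v\,dx=-\int_{S_k}v(A\nabla v)\cdot\nabla\eta_k\,dx\le\frac{\sqrt\beta}{h}\|v\|_{L^2(S_k)}\|v\|_{a,S_k}.
\]
A single application of the cellwise Poincar\'e inequality, $\|v\|_{L^2(S_k)}\le\pi^{-1}h\alpha^{-1/2}\|v\|_{a,S_k}$, then gives $\|v\|_{a,\omega_{k-1}}^2\le\pi^{-1}\sqrt{\beta/\alpha}\,\|v\|_{a,S_k}^2$. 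Only one factor is converted from $L^2$ to energy, which is why $\sqrt{\beta/\alpha}$ appears. Hole-filling then yields $-\log\theta=\log(1+\pi\sqrt{\alpha/\beta})\ge\sqrt{\alpha/\beta}/(1+\pi^{-1})$ from a single layer.

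Two smaller points. First, Caccioppoli cannot be applied to $v-\Pi v$, which is not $A$-harmonic across cell faces; work directly on $\{v\in V:\Pi v=0\}$, as you later indicate. Second, the constant $c=(128(1+\pi^{-1}))^{-1}$ comes from the global count $\sum_k m_k=(3^d-1)K^d\le(n+1)K/2$ with $K=\lfloor((n+1)/(2(3^d-1)))^{1/(d-1)}\rfloor$ and $h=2r/K$, together with $K\ge\frac12\bigl((n+1)/(2(3^d-1))\bigr)^{1/(d-1)}$ and $(2(3^d-1))^{1/(d-1)}\le16$. A per-layer requirement $N\le m/2$ would produce a different constant.
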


Note that $d_n$ is at most one for every $n\in\mathbb{N}$, since $v=0$ is an admissible approximant in every space $V_n$ and restriction does not increase the energy.

\Cref{thm:main} concerns interior subdomains. The case where $\omega^*$ intersects the boundary of a domain~$\Omega$ is similar. We replace $\omega$ and $\omega^*$ by $\omega\cap\Omega$ and $\omega^*\cap\Omega$ and impose homogeneous Dirichlet conditions on $\partial\Omega\cap\omega^*$ in the definition of~$\mathcal H$. The proof then carries over with minor modifications similar to those in~\cite{MR5010382,MaScheichlDodwell2022}.

\subsection{Proof of \texorpdfstring{\cref{thm:main}}{Theorem 1}}
The quantity $d_n$ is the Kolmogorov $n$-width of the restriction operator
\begin{equation}
	\label{eq:P}
	P:\mathcal H(\omega^*)\longrightarrow\mathcal H(\omega),
	\qquad
	Pu=u|_\omega,
\end{equation}
with respect to the energy seminorms. For the Hilbert-space argument,
functions differing by a constant are identified. This does not
change $d_n$.

The operator $P$ is compact; see~\cite[Appendix~A]{MR2801210}.
The spectral characterization
in~\cite[Theorems~3.1--3.2]{MR2801210} gives
\begin{equation}
d_n=\sqrt{\lambda_{n+1}},
\label{eq:width-eigenvalue}
\end{equation}
where $\lambda_1\ge\lambda_2\ge\cdots\ge0$ are the eigenvalues of $P^*P$, with $P^*$ the adjoint of~$P$ with respect to the energy inner products. The corresponding $a_{\omega^*}$-orthonormal eigenfunctions $u_i\in\mathcal H(\omega^*)$ satisfy
\[
a_\omega(u_i,v)=\lambda_i a_{\omega^*}(u_i,v)
\qquad\text{for every }v\in\mathcal H(\omega^*).
\]
If $P$ has finite rank, the sequence of eigenvalues is extended by zeros.

Fix $n$ such that $n+1\ge2(3^d-1)$, as required by the theorem. If $\lambda_{n+1}=0$, the assertion follows immediately.
Otherwise, fix representatives of the first $n+1$ eigenfunctions and set
\[
V\coloneqq \operatorname{span}\{u_1,\ldots,u_{n+1}\}.
\]
These functions and their representatives remain fixed throughout
the argument.

We choose
\[
K\coloneqq 
\left\lfloor
\left(\frac{n+1}{2(3^d-1)}\right)^{1/(d-1)}
\right\rfloor,
\]
where $\lfloor\cdot\rfloor$ denotes the floor function. Then, $K\ge1$ and
\begin{equation}
    \label{eq:estL}
    (3^d-1)K^{d-1}\le\frac{n+1}{2},
\qquad
K\ge\frac12
\left(\frac{n+1}{2(3^d-1)}\right)^{1/(d-1)}.
\end{equation}

We define
\[
h\coloneqq \frac{2r}{K},
\qquad
\omega_k\coloneqq (-r-kh,r+kh)^d,
\qquad k=0,\ldots,K.
\]
Thus, $\omega_0=\omega$ and $\omega_K=\omega^*$.
Partition $\omega^*$ into open Cartesian cubes of side length $h$,
up to their boundaries, and denote this partition by $\mathcal T_h$.
Each $\omega_k$ is a union of cells of $\mathcal T_h$, up to their boundaries.

For $k=1,\ldots,K$, set
\[
S_k\coloneqq \omega_k\setminus\overline{\omega_{k-1}},
\qquad
\mathcal T_h(S_k)\coloneqq \{T\in\mathcal T_h:T\subset S_k\},
\qquad
m_k\coloneqq \#\mathcal T_h(S_k).
\]
The cube $\omega_k$ contains exactly $(K+2k)^d$ cells.
Hence,
\begin{equation}
m_k=(K+2k)^d-(K+2k-2)^d,
\qquad
\sum_{k=1}^K m_k=(3^d-1)K^d.
\label{eq:exact-layer-count}
\end{equation}

On each layer, define
\[
W_k\coloneqq 
\left\{
v\in V:
\int_T v\,dx=0
\quad\text{for every }T\in\mathcal T_h(S_k)
\right\}.
\]
These are $m_k$ linear conditions, so
\begin{equation}
\dim W_k\ge(n+1-m_k)_+,
\label{eq:layer-dimension}
\end{equation}
where $t_+\coloneqq \max\{t,0\}$.

The Poincaré constant on $(0, 1)^d$ for functions with vanishing mean is $\pi^{-1}$. For every $v \in W_k$, scaling and summing the cellwise Poincaré inequalities yields
\begin{equation}
\|v\|_{L^2(S_k)}
\le \pi^{-1} h\|\nabla v\|_{L^2(S_k)}.
\label{eq:layer-poincare}
\end{equation}
Choose
\[
\eta_k(x)\coloneqq 
\min\left\{
1,\,
\max\left\{0,\frac{r+kh-\|x\|_\infty}{h}\right\}
\right\}.
\]
This cutoff equals one on $\omega_{k-1}$, vanishes outside
$\omega_k$, and satisfies $|\nabla\eta_k|\le h^{-1}$
almost everywhere. In particular,
$\eta_k v\in H^1_0(\omega^*)$ for every $v\in V$.

For $v\in W_k$, weak harmonicity and
\cref{eq:layer-poincare} yield
\begin{align*}
\|v\|_{a,\omega_{k-1}}^2
&\le
\int_{\omega^*}\eta_k (A\nabla v)\cdot\nabla v\,dx=
-\int_{S_k}v(A\nabla v)\cdot\nabla\eta_k\,dx\\
&\le
\frac{\sqrt\beta}{h}
\|v\|_{L^2(S_k)}\|v\|_{a,S_k}\le
\pi^{-1}\sqrt{\frac{\beta}{\alpha}}\,
\|v\|_{a,S_k}^2,
\end{align*}
where we used the coefficient bounds in~\cref{eq:coeffbounds}. Adding the inner and shell energies therefore gives
\begin{equation}
\|v\|_{a,\omega_{k-1}}^2
\le\theta\|v\|_{a,\omega_k}^2,
\qquad
\theta\coloneqq 
\frac{\pi^{-1}\sqrt{\beta/\alpha}}
     {1+\pi^{-1}\sqrt{\beta/\alpha}}
<1.
\label{eq:layer-contraction}
\end{equation}
Following an idea of Li and Wang~\cite{MR1804533}, we compare
the energy forms on the fixed space $V$ through their determinants.
Define
\[
G_k\coloneqq 
\bigl(a_{\omega_k}(u_i,u_j)\bigr)_{i,j=1}^{n+1},
\qquad k=0,\ldots,K.
\]
Their endpoint values are
\begin{equation}
G_0=\operatorname{diag}(\lambda_1,\ldots,\lambda_{n+1}),
\qquad
G_K=I_{n+1}.
\label{eq:endpoint-gramians}
\end{equation}
Since $\lambda_{n+1}>0$ and $G_k-G_0$ is positive semidefinite,
every $G_k$ is positive definite.

Fix a layer $k$. Choose an $a_{\omega_k}$-orthonormal basis
$\{v_1,\ldots,v_{n+1}\}$ of $V$ whose first $\dim W_k$ members span
$W_k$. Let $Q_k$ be the corresponding change-of-basis matrix.
Then
\[
Q_k^\top G_k Q_k=I_{n+1},
\qquad
B_k\coloneqq Q_k^\top G_{k-1}Q_k
=
\bigl(a_{\omega_{k-1}}(v_i,v_j)\bigr)_{i,j=1}^{n+1}.
\]
By \cref{eq:layer-contraction}, the first $\dim W_k$ diagonal
entries of $B_k$ are at most $\theta$. All remaining diagonal
entries are at most one by monotonicity of the energy under
restriction. Using the multiplicativity of the determinant and Hadamard's inequality for positive definite matrices~\cite[Theorem~7.8.1]{HornJohnson2013} therefore gives
\begin{equation}
\frac{\det G_{k-1}}{\det G_k}
=
\det B_k
\le
\prod_{i=1}^{n+1}(B_k)_{ii}
\le
\theta^{\dim W_k}
\le
\theta^{(n+1-m_k)_+}.
\label{eq:layer-determinant}
\end{equation}

Multiplying these estimates over all layers and using
\cref{eq:endpoint-gramians} yields
\[
\prod_{i=1}^{n+1}\lambda_i
=
\frac{\det G_0}{\det G_K}
\le
\theta^{\sum_{k=1}^K(n+1-m_k)_+}.
\]
By the exact count \cref{eq:exact-layer-count},
\[
\sum_{k=1}^K(n+1-m_k)_+
\ge
(n+1)K-\sum_{k=1}^K m_k
=
(n+1)K-(3^d-1)K^d.
\]
Since $0<\theta<1$, we conclude that
\begin{equation}
\prod_{i=1}^{n+1}\lambda_i
\le
\theta^{(n+1)K-(3^d-1)K^d}.
\label{eq:product-bound}
\end{equation}

Using the ordering of the eigenvalues, \cref{eq:product-bound}, and the first estimate in \cref{eq:estL}, we obtain
\[
\lambda_{n+1}^{n+1}
\le
\prod_{i=1}^{n+1}\lambda_i
\le
\theta^{(n+1)K/2}.
\]
Consequently,
\[
d_n=\sqrt{\lambda_{n+1}}\le\theta^{K/4}.
\]
Since $\alpha \le \beta$, the definition of $\theta$ and the elementary inequality $\log(t) \ge \frac{t-1}{t}$ for $t > 0$ yield
\[
-\log\theta
=
\log\left(1+\pi\sqrt{\frac{\alpha}{\beta}}\right)
\ge
\frac{1}{1+\pi^{-1}}\sqrt{\frac{\alpha}{\beta}}.
\]
Therefore, the second estimate in \cref{eq:estL} yields
\[
d_n
\le
\exp\left(
-\frac{1}{8(1+\pi^{-1})}
\sqrt{\frac{\alpha}{\beta}}
\left(\frac{n+1}{2(3^d-1)}\right)^{1/(d-1)}
\right).
\]
This and the elementary estimate $\big(2(3^d-1)\big)^{1/(d-1)} \leq 16$, which holds for $d \geq 2$, readily yield the assertion.

\subsection{Consequences for MS-GFEM}
In the original MS-GFEM of Babu\v{s}ka and Lipton~\cite{MR2801210}, the local approximation spaces on the subdomain~$\omega$ are built from $A$-harmonic functions on the oversampling domain~$\omega^*$. These spaces are optimal in the sense of Kolmogorov $n$-widths. With $n$ local basis functions, the local approximation error equals the $n$-width~$d_n$ from \cref{thm:main}. For concentric cubes, \cref{thm:main} thus bounds this error by $\exp\bigl(-c\sqrt{\alpha/\beta}\,(n+1)^{1/(d-1)}\bigr)$. For compactly nested bounded Lipschitz domains, the covering argument mentioned at the beginning of this section yields the same rate, with constants depending additionally on the geometry. This improves the exponent $1/d$ established in~\cite{MR5010382}.

More recent versions of the method~\cite{MR5010382,MaScheichlDodwell2022} additionally multiply the restricted $A$-harmonic functions by the partition of unity function~$\chi$ associated with~$\omega$. Their local spaces are spanned by the first $n$ singular vectors of the compact operator
\[
P_\chi\colon \mathcal{H}(\omega^{\ast})\rightarrow H_{0}^{1}(\omega),
\qquad
P_\chi u\coloneqq \chi\,(u|_{\omega}),
\]
where, as before, $\mathcal{H}(\omega^{\ast})$ is equipped with the energy norm and functions differing by a constant are identified. The local approximation error is then bounded by the $n$-width $d_n(P_\chi)$ of this operator. It is straightforward to show that $d_n(P_\chi)$ is at most a constant multiple of~$d_n$, with a constant depending only on~$\chi$. Hence, these methods achieve the same decay rate as the original MS-GFEM.


\section{Super-localization}
For the application to the SLOD~\cite{MR4550317}, we restrict ourselves to a Cartesian mesh with mesh size $H>0$, although more general settings could be considered. Each basis function is computed on an oversampling patch consisting of an element and $\ell\in\mathbb N$ surrounding element layers, where $\ell$ is the oversampling parameter. Centering the patch at the origin, we have
\[
\omega^*=\bigl(-\tfrac{2\ell+1}{2}H,\tfrac{2\ell+1}{2}H\bigr)^d.
\]
We denote by $\mathcal T_H(\omega^*)$ the partition of $\omega^*$ into
$(2\ell+1)^d$ open Cartesian cubes of side length $H$, and by
$\mathbb Q^0_H(\omega^*)$ the space of $\mathcal T_H(\omega^*)$-piecewise
constant functions.

The basis function associated with $\omega^*$ is the patch-local
response to a piecewise constant source $g$ attaining the infimum in
\begin{equation}
\label{eq:sigma}
\sigma_{H,\ell}\coloneqq 
\adjustlimits\inf_{\substack{g\in\mathbb Q^0_H(\omega^*)\\
                \|g\|_{L^2(\omega^*)}=1}}
\sup_{\substack{v\in\mathcal H(\omega^*)\\
                \|v\|_{H^1(\omega^*)}=1}}
\;
(g,v)_{L^2(\omega^*)},
\end{equation}
where $\|v\|_{H^1(\omega^*)}^2\coloneqq \|v\|_{L^2(\omega^*)}^2+\|\nabla v\|_{L^2(\omega^*)}^2$.
The quantity $\sigma_{H,\ell}$ controls the resulting localization
error; see~\cite[Theorem~6.1]{MR4550317}. 

\begin{corollary}[Super-exponential localization]
\label{cor:slod}
For every $\ell\ge1$,
\[
\sigma_{H,\ell}
\le
\frac{(\ell+2)H}{\pi}\sqrt{\frac{\beta}{\alpha}}\,
\exp\left(
-\frac c8\sqrt{\frac{\alpha}{\beta}}\,
\ell^{d/(d-1)}
\right),
\]
where $c$ is the constant from \cref{thm:main}.
\end{corollary}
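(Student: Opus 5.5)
The plan is to turn the $n$-width bound of \cref{thm:main} into a good source by duality: choose $g$ to annihilate a near-optimal approximation space for $A$-harmonic functions together with constants, and then control $(g,v)$ by a Poincar\'e inequality on a subcube. The factor $\frac{(\ell+2)H}{\pi}$ should come from the Poincar\'e constant, $\sqrt{\beta/\alpha}$ from switching between gradient and energy norms, and the exponential from $d_n$.

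\textbf{Step 1 (geometry).} Let $j\ge0$ be the largest integer with $3(2j+1)\le 2\ell+1$, and set
\[
\omega\coloneqq\bigl(-\tfrac{2j+1}{2}H,\tfrac{2j+1}{2}H\bigr)^d,\qquad
\tilde\omega\coloneqq\bigl(-\tfrac{3(2j+1)}{2}H,\tfrac{3(2j+1)}{2}H\bigr)^d\subseteq\omega^*.
\]
Then $\omega$ is a union of $N\coloneqq(2j+1)^d$ cells of $\mathcal T_H(\omega^*)$, and its side length is $L=(2j+1)H\le(\ell+2)H$. The pair $(\omega,\tilde\omega)$ is exactly the concentric-cube setting of \cref{thm:main}, with $r=L/2$.

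\textbf{Step 2 (choice of $g$).} Set $n\coloneqq N-2$. Take an optimal $n$-dimensional space $V_n\subset\mathcal H(\omega)$ for $d_n$ with respect to $(\omega,\tilde\omega)$. Fix representatives $w_1,\dots,w_n$ of a basis of $V_n$. Pick $g\in\mathbb Q^0_H(\omega^*)$ supported in $\omega$ with $\|g\|_{L^2}=1$, $\int_\omega g=0$ and $(g,w_i)=0$ for all $i$. This imposes $n+1=N-1<N$ linear conditions, so a nonzero such $g$ exists.

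\textbf{Step 3 (estimate).} Let $v\in\mathcal H(\omega^*)$ with $\|v\|_{H^1(\omega^*)}=1$. Its restriction lies in $\mathcal H(\tilde\omega)$. Choose $w\in V_n$ with $\|v-w\|_{a,\omega}\le d_n\|v\|_{a,\tilde\omega}$, and let $c$ be the mean of $v-w$ on $\omega$. By the annihilation properties and the Poincar\'e inequality on the cube $\omega$ (constant $L/\pi$),
\[
(g,v)=(g,v-w-c)\le \frac{L}{\pi}\,\|\nabla(v-w)\|_{L^2(\omega)}\le\frac{L}{\pi\sqrt\alpha}\,d_n\,\|v\|_{a,\tilde\omega}.
\]
Using $\|v\|_{a,\tilde\omega}\le\sqrt\beta\,\|\nabla v\|_{L^2(\omega^*)}\le\sqrt\beta$, this gives
\[
\sigma_{H,\ell}\le\frac{(\ell+2)H}{\pi}\sqrt{\beta/\alpha}\;d_{N-2}.
\]

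\textbf{Step 4 (insert \cref{thm:main}).} Apply \cref{thm:main} with $n+1=N-1$, which gives the exponent $c\sqrt{\alpha/\beta}\,(N-1)^{1/(d-1)}$. Since $2j+1\ge(2\ell+1)/3-2$, one has roughly $N\gtrsim(\ell/3)^d$, so the exponent is of order $\ell^{d/(d-1)}$. It remains to check that $(N-1)^{1/(d-1)}\ge\ell^{d/(d-1)}/8$.

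\textbf{Main obstacle.} I expect the main difficulty to be this constant bookkeeping, together with the small-$\ell$ regime where \cref{thm:main} does not apply. When $\ell$ is small, either $N-1<2(3^d-1)$ or the inequality $(N-1)^{1/(d-1)}\ge\ell^{d/(d-1)}/8$ fails; the extreme case $j=0$, $N=1$ occurs for $\ell\le3$. Since $H$ is arbitrary, the trivial bound $\sigma_{H,\ell}\le1$ is not enough there.

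\begin{itemize}
\item The first thing I would try is to note that the target exponent $\frac c8\sqrt{\alpha/\beta}\,\ell^{d/(d-1)}$ is tiny for small $\ell$, since $c<10^{-2}$. For $N\ge2$, use $n=0$: take a mean-zero $g$ on $\omega$ and $d_0\le1$, which gives $\sigma_{H,\ell}\le\frac{(\ell+2)H}{\pi}\sqrt{\beta/\alpha}$. The remaining loss is then the factor $\exp(-\frac c8\sqrt{\alpha/\beta}\,\ell^{d/(d-1)})$, which is close to one but not equal to it, so it still has to be absorbed.
\item If this does not close, I would enlarge $\omega$ to cover the patch more efficiently, for instance by letting $\omega$ be cell-aligned but not necessarily one third of the patch, together with a covering argument. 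In the degenerate case $N=1$ the source must be spread over several cells of a larger cube.
\end{itemize}
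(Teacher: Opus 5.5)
Your argument for $\ell\ge7$ is essentially the paper's: annihilate an optimal $n$-width space plus constants with a cellwise constant source, then apply Cauchy--Schwarz, Poincar\'e and \cref{thm:main}. The paper takes $\omega$ to be the generally non-cell-aligned middle third of the patch and oversamples with the full patch. You take the cell-aligned cube and $\tilde\omega=3\omega\subseteq\omega^*$. Both give the same count $N=(2\lfloor(\ell-1)/3\rfloor+1)^d$, and your variant is fine. However, the proposal is not a complete proof, for two reasons.

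First, Step~4 is only asserted. \Cref{thm:main} applies exactly when $j\ge2$, i.e.\ $\ell\ge7$; for $j=1$ one has $N-1=3^d-1<2(3^d-1)$. The bound $(N-1)^{1/(d-1)}\ge\ell^{d/(d-1)}/8$ then needs integrality: $j\ge(\ell-3)/3$ gives $2j+1\ge(2\ell-3)/3\ge\ell/2$ for $\ell\ge6$. Your estimate $2j+1\ge(2\ell-5)/3$ reaches $\ell/2$ only for $\ell\ge10$. Combined with $N-1\ge N/2$ and $2^{-(d+1)/(d-1)}\ge1/8$, this finishes the large-$\ell$ case.

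Second, and more seriously, the range $1\le\ell\le6$ is left open, and your first bullet wrongly suggests that the exponential factor cannot be absorbed. The missing observation is that for $\ell\le6$ this factor is uniformly close to one. Since $\sqrt{\alpha/\beta}\,\ell^{d/(d-1)}\le\ell^2\le36$ and $9c/2\le1/32$, you get $\exp(-\frac c8\sqrt{\alpha/\beta}\,\ell^{d/(d-1)})\ge\frac{31}{32}\ge\frac{2}{\ell+2}$. Together with $\sqrt{\beta/\alpha}\ge1$, the right-hand side is at least $2H/\pi$. So it suffices to exhibit one admissible source with $\sigma_{H,\ell}\le 2H/\pi$.

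The paper does this with a dipole: $g$ equals $\pm\kappa$ on two face-adjacent cells and zero elsewhere. It has mean zero on the box $B$ with side lengths $2H,H,\dots,H$, whose Poincar\'e constant is $2H/\pi$, so $(g,v)\le\frac{2H}{\pi}\|\nabla v\|_{L^2(B)}\le\frac{2H}{\pi}$.

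Your $n=0$ bound would also close the cases $\ell=4,5,6$ if you kept $L=3H$ instead of replacing it by $(\ell+2)H$, and dropped the unnecessary factor $\sqrt{\beta/\alpha}$: you get $3H/\pi\le\frac{31}{32}\cdot\frac{6H}{\pi}$. For $\ell\le3$, however, you genuinely need a source spread over at least two cells. No covering argument is required.
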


\begin{proof}
The proof strategy is inspired by the proof of~\cite[Theorem~7.3]{MR4550317}. First, let $\ell\ge7$. We apply \cref{thm:main} to $\omega^*$ and
the concentric cube
\[
\omega=\bigl(-\tfrac{2\ell+1}{6}H,\tfrac{2\ell+1}{6}H\bigr)^d.
\]
A cell of $\mathcal T_H(\omega^*)$ with center $Hz$, $z\in\mathbb Z^d$,
lies in $\omega$ if and only if $\|z\|_\infty+\frac12\le\frac{2\ell+1}{6}$,
that is, $\|z\|_\infty\le(\ell-1)/3$. Hence, the submesh
$\mathcal T_H(\omega)\coloneqq \{T\in\mathcal T_H(\omega^*):T\subset\omega\}$
consists of $(2m+1)^d$ cells, where $m\coloneqq \lfloor(\ell-1)/3\rfloor$. Let $\mathbb Q^0_H(\omega)\subset\mathbb Q^0_H(\omega^*)$ denote the subspace of functions that vanish on all cells not in~$\mathcal T_H(\omega)$. We set
$n\coloneqq (2m+1)^d-2$, so that $\dim\mathbb Q^0_H(\omega)=n+2$. Since
$\ell\ge7$, we have $m\ge2$ and thus $n+1\ge5^d-1\ge2(3^d-1)$, as
required in \cref{thm:main}.

Let $V_n\subset\mathcal H(\omega)$ with $\dim V_n = n$ be an optimal
space in the definition of~$d_n$, which exists by~\cite[Theorems~3.1--3.2]{MR2801210}. For the $n+2$ cell values of
$g\in\mathbb Q^0_H(\omega)$, $L^2(\omega)$-orthogonality to
$V_n+\operatorname{span}\{1\}$ amounts to at most $n+1$ linearly independent conditions. Hence, there exists a
solution $g$ with $\|g\|_{L^2(\omega^*)}=1$. It need not be unique, and
any such $g$ will do.

Consider $v\in\mathcal H(\omega^*)$ and $q\in V_n$. Since $g$ vanishes
outside $\omega$ and is orthogonal to $q$ and to the constants, the
Cauchy--Schwarz inequality, the Poincaré inequality on the cube $\omega$
of side length $(2\ell+1)H/3$, and~\cref{eq:coeffbounds} yield
\begin{align*}
(g,v)_{L^2(\omega^*)}
&=
\Bigl(g,\,v-q-\fint_\omega(v-q)\,dx\Bigr)_{L^2(\omega)}
\le
\Bigl\|v-q-\fint_\omega(v-q)\,dx\Bigr\|_{L^2(\omega)}\\
&\le
\frac{(2\ell+1)H}{3\pi}\|\nabla(v-q)\|_{L^2(\omega)}
\le
\frac{(\ell+2)H}{\pi\sqrt\alpha}\|v-q\|_{a,\omega},
\end{align*}
using that $2\ell+1\le3(\ell+2)$.
Taking the infimum over $q\in V_n$, the optimality of $V_n$ and~\cref{eq:coeffbounds} yield
\[
(g,v)_{L^2(\omega^*)}
\le
\frac{(\ell+2)H}{\pi\sqrt\alpha}\,d_n\|v\|_{a,\omega^*}
\le
\frac{(\ell+2)H}{\pi}\sqrt{\frac{\beta}{\alpha}}\,d_n\|v\|_{H^1(\omega^*)}.
\]
Since $g$ is admissible in the definition of $\sigma_{H,\ell}$, taking the
supremum over all $v\in\mathcal H(\omega^*)$ with
$\|v\|_{H^1(\omega^*)}=1$ and applying \cref{thm:main} yields
\[
\sigma_{H,\ell}
\le
\frac{(\ell+2)H}{\pi}\sqrt{\frac{\beta}{\alpha}}\,d_n
\le
\frac{(\ell+2)H}{\pi}\sqrt{\frac{\beta}{\alpha}}\,
\exp\left(
-c\sqrt{\frac{\alpha}{\beta}}\,
(n+1)^{1/(d-1)}
\right).
\]

Finally, using that $m=\lfloor(\ell-1)/3\rfloor\ge(\ell-3)/3$, we obtain $2m+1\ge(2\ell-3)/3\ge\ell/2$ for $\ell\ge6$. Together with $(2m+1)^d\ge2$ and $(d+1)/(d-1)\le3$, this gives
\begin{align*}
(n+1)^{1/(d-1)}
&=
\bigl((2m+1)^d-1\bigr)^{1/(d-1)}
\ge
\bigl(\tfrac12(2m+1)^d\bigr)^{1/(d-1)}
\ge
\bigl(\tfrac12(\ell/2)^d\bigr)^{1/(d-1)}\\
&=
2^{-(d+1)/(d-1)}\,\ell^{d/(d-1)}
\ge
\tfrac18\,\ell^{d/(d-1)}.
\end{align*}
Inserting this into the previous bound yields the assertion for $\ell\ge7$.

Now, let $1\le\ell\le6$. We choose two cells of $\mathcal T_H(\omega^*)$ that share a face. Together, they form a box~$B$ with side lengths $2H,H,\ldots,H$. Let $g\in\mathbb Q^0_H(\omega^*)$ equal a positive constant on one of the two cells, the negative of this constant on the other, and zero elsewhere, where the constant is chosen such that $\|g\|_{L^2(\omega^*)}=1$. Then $g$ vanishes outside $B$ and has vanishing mean. By the definition of $\sigma_{H,\ell}$ in~\cref{eq:sigma}, the
Cauchy--Schwarz inequality, the Poincaré inequality on $B$ with constant $2H/\pi$, and $\|\nabla v\|_{L^2(\omega^*)}\le\|v\|_{H^1(\omega^*)}$ yield
$\sigma_{H,\ell}\le2H/\pi$. Moreover,
$\sqrt{\alpha/\beta}\,\ell^{d/(d-1)}\le\ell^2\le36$ and
$9c/2\le1/32$. Hence, the elementary inequality $\exp(-t)\ge1-t$ for $t\in\mathbb R$ shows that the exponential factor in the assertion is at least
\[
\exp(-9c/2)\ge1-\tfrac{9c}{2}\ge\tfrac{31}{32}\ge\tfrac23\ge\tfrac{2}{\ell+2},
\]
which, together with $\sqrt{\beta/\alpha}\ge1$, yields the assertion.
\end{proof}

The localization contribution to the error of the super-localized
generalized finite element method is likewise controlled by
$\sigma_{H,\ell}$; see~\cite[Theorem~5.3]{MR4700411}.
Under the assumptions of that theorem, \cref{cor:slod} yields
super-exponential decay of this contribution with respect
to the oversampling parameter~$\ell$.
\bigskip

\paragraph{\textbf{Acknowledgments.}}
This work grew out of discussions during the 9th Chinese--German Workshop on Computational and Applied Mathematics in Bonn (October 5--9, 2026), where the authors became aware of their independent progress and combined their approaches. They thank the organizers for the stimulating environment. M. Hauck acknowledges funding from the Deutsche Forschungsgemeinschaft (DFG, German Research Foundation) -- Project-ID 258734477 -- SFB~1173.

\medskip
\paragraph{\textbf{Use of large language models.}}
ChatGPT was used to develop the central arguments in the proof of
\cref{thm:main}. In particular, it identified the
energy-determinant argument of Li and Wang~\cite{MR1804533}
and suggested adapting it to the local approximation problem.
Claude was used for language
editing and to check and refine the constants in the proof of \cref{cor:slod}.
The authors have checked all mathematical arguments and take
full responsibility for the content of the paper.

\bibliographystyle{abbrv}
\bibliography{refs}
\end{document}